\documentclass[11pt]{article}
\usepackage[]{times}
\usepackage{amsmath}
\usepackage{paralist}
\usepackage{fullpage}
\usepackage{amssymb}
\usepackage{mathrsfs}
\usepackage{bm}
\usepackage{graphicx}
\usepackage{natbib}
\usepackage{graphicx}
\usepackage{wrapfig}
\usepackage{epsfig}
\usepackage{url}
\usepackage{fullpage}
\usepackage{psfrag}
\usepackage{verbatim}
\usepackage{amsthm}
\usepackage[]{times}
\usepackage{amsmath}
\usepackage{paralist}
\usepackage{fullpage}
\usepackage{amssymb}

\usepackage[sans]{dsfont}

\usepackage{mathrsfs}
\usepackage{bm}
\usepackage{graphicx}
\usepackage{natbib}
\usepackage{graphicx}
\usepackage{wrapfig}
\usepackage{epsfig}
\usepackage{url}
\usepackage{fullpage}
\usepackage{psfrag}
\usepackage{verbatim}

\bibpunct{(}{)}{;}{a}{,}{,}

\numberwithin{equation}{section}
\newtheorem{thm}{Theorem}[section]
\newtheorem{lemma}{Lemma}[section]
\newtheorem{cor}{Corollary}[section]

\setlength{\textwidth}{6.5in}
\addtolength{\topmargin}{-0.0in}
\setlength{\oddsidemargin}{0.0in}
\setlength{\textheight}{9in}

\title{On The Drift Paradox in a Regime-Switching Model}
\author{William Felder\thanks{Department of Mathematics, 368 Kidder Hall, Oregon State University, Corvallis, OR.}
\and
Edward C. Waymire\thanks{Department of Mathematics, 368 Kidder Hall,  Oregon State University, Corvallis, OR. Corresponding author. \tt {waymire@math.oregonstate.edu}.}
}

\begin{document}

\maketitle

\begin{abstract}
This note is motivated by the article by  F. Lutscher, E. Pachepsky, and M.
Lewis (2005), The Effect of Dispersal Patterns on Stream Populations SIAM Rev.
Vol. 47 No. 4 pp. 749-772 on the drift paradox.   We consider the case of
a regime switching probabilistic model for a population of organisms living in a one dimensional environment  with drift towards an  absorbing boundary of the type introduced by
Lutscher et al. (2005).  In particular,  the two regimes consist of birth/death style demographics
governing the evolution of the immobile regime, and some form of
advection-dispersion governing the evolution of the mobile regime, together with a
regime-switching mechanism linking the two.  In the present note it is shown
for  the regime-switching model, and in contrast to the results
in the afore cited work, for any finite advection speed, no matter how
large, there is a finite critical length of the domain above which the population
can persist.
\end{abstract}

\section{Introduction and Preliminaries}
In the paper \cite{lutscher_etal_2005}, a benthos-drift model is presented, wherein a population of organisms existing in an environment subject to uni-directional flow is modeled as occupying two regimes: the benthic organisms are reproductive and non-mobile (these organisms are said to occupy regime 0 in our terminology), and the mobile organisms are non-reproductive (regime 1, here).  Switching from the non-mobile to the mobile regimes is taken to occur at some constant rate, here called $\lambda_0$, and switching from the mobile to the non-mobile regime also occurs with with constant rate, here called $\lambda_1$;  such models fall within a general class of so-called {\it regime or hybrid 
switching} models, see
\cite{yin}.
\par
It is argued in (\cite{lutscher_etal_2005}, p. 755) that, \lq\lq if the rate at which individuals move times the probability that they leave the domain during dispersal exceeds the population growth rate, then the population will go extinct\rq\rq.   
In particular, this provides a critical equation for length as a function of the growth rate and drift speed in the form
\begin{equation}
\label{LPLeqn}
r = \lambda_0\pi (\lambda _1,L,Y)
\end{equation}
where $L$ (\lq\lq length\rq\rq) is the location of an absorbing boundary, $Y=\{Y_t:t\ge 0\}$ is a stochastic process (to be described further) that describes the motion of individuals in the mobile regime, and the \lq\lq wash-out probability\rq\rq is given by 
\begin{equation}
\label{washprob}
\pi (\lambda_1, L, Y) = P(T_L < S_1),
\end{equation}
where $T_L$ is the first passage time to the absorbing boundary at $L$, and $S_1$, the regime switching time, is an exponentially distributed  random variable with parameter $\lambda_1$ 
representing the amount of time an individual will remain motile before settling back into the benthos.  In \cite{lutscher_etal_2005} this formula (\ref{washprob})  is obtained by means of a spectral analysis of another model (\ref{popdyn}) for population density $u$ that in turn is argued to yield (\ref{LPLeqn}),
where 
\begin{equation}
\label{popdyn}
{\partial u(x,t)\over\partial t} = ru(x,t) -\lambda u(x,t) + \int_0^L \lambda k(y,x)u(y,t)dy,
\end{equation}
for $r, \lambda > 0$, and a transition probability kernel $k(y,x)$.
\
We consider the underlying regime switching model for the mobile and immobile populations from 
(\cite{lutscher_etal_2005}, p. 755) together with this formula (\ref{LPLeqn}), in place of
(\ref{popdyn}),  as  the starting point for an alternative analysis of  the drift paradox.   
In particular, we will see that the critical length so-determined differs substantially from that
determined by (\ref{popdyn}).   
\\

The drift speed is determined by the following general assumption regarding the nature of the
advection-dispersion in the mobile regime.

\noindent{\bf The Drift Hypothesis (DH):}
{\it Assume $Y = \{Y_t: t\ge 0\}$ is a stochastic process starting at $Y_0 = 0$ and
having independent, stationary increments with $\mathbb{E}(Y_{t+s} - Y_s) = vt \quad  0<v<\infty$.    Moreover, assume  that  the paths of $Y$ are right-continuous without upward jumps.} 

\ 

The stochastic process $Y$ represents  the motion of individuals in the mobile phase.  
Such processes are said to be {\it spectrally negative 
L\'evy processes}. The point of the spectral negativity assumption is to exclude motile particles that can jump over $L$ without being absorbed there.  We note that such a process is non-explosive under the assumption of right-continuity
in (DH).
\\
The {\it L\'evy-It\^o decomposition} for this process is
\begin{equation}
Y_t=\tilde{v}t+\sqrt{D}B_t+J_t,
\end{equation}
where $\tilde{v}>0$ is deterministic drift to the right, $D\ge0$, $B_t$ is a standard Brownian motion, and $J_t$ is a L\'evy process, which jumps only to the left.
\\
With $v$ defined by (DH), it can be shown that
\begin{equation}
v=\tilde{v}+\int_0^\infty xn(dx),
\end{equation}
where  $n(\cdot)$ is the L\'evy measure associated with this process.\\
This quantity $v$ is called the {\it effective velocity},  and it accounts for both deterministic drift to the right, and average L\'evy drift.  Our assumption in (DH) that $v>0$, amounts to a constraint on the pair $\tilde{v},n(\cdot)$.
\\

The following simple lemma provides an alternative approach to the calculation of wash-out probabilities  for this model in terms of
the Laplace transform of the first passage time to the boundary evaluated at $\lambda_1$.

\begin{lemma}
\label{washtoLT}
 The wash-out probability is given by,
\begin{equation}
\pi (\lambda_1, L, Y) = \mathbb{E} e^{-\lambda_1 T_L}.
\end{equation}
\end{lemma}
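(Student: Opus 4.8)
The plan is to condition on the first passage time $T_L$ and exploit the independence between the regime-switching clock $S_1$ and the mobile-phase process $Y$. Since $T_L$ is by definition the first passage time of $Y$ to level $L$, it is a measurable functional of the path of $Y$ alone; the holding time $S_1$ in the mobile regime is generated by an independent exponential clock, and is therefore independent of $Y$, and in particular of $T_L$. This independence is the single structural fact the argument rests on.

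First I would write, using the definition (\ref{washprob}) together with the tower property of conditional expectation,
$$\pi(\lambda_1, L, Y) = P(T_L < S_1) = \mathbb{E}\bigl[\, P(S_1 > T_L \mid T_L)\,\bigr].$$
Next, on the event $\{T_L = t\}$ with $t < \infty$, independence gives $P(S_1 > t \mid T_L = t) = P(S_1 > t) = e^{-\lambda_1 t}$, the survival function of the exponential distribution with parameter $\lambda_1$. Substituting the random value $T_L$ then yields $P(S_1 > T_L \mid T_L) = e^{-\lambda_1 T_L}$, and taking expectations produces the asserted identity $\pi(\lambda_1,L,Y) = \mathbb{E}\, e^{-\lambda_1 T_L}$.

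The only point requiring explicit care is the event $\{T_L = \infty\}$, on which the particle never reaches the boundary. There $S_1 > T_L$ is impossible, since $S_1 < \infty$ almost surely, so this event contributes nothing to $P(T_L < S_1)$; adopting the convention $e^{-\lambda_1 \cdot \infty} = 0$ makes the integrand vanish there as well, so the pointwise identity $P(S_1 > T_L \mid T_L) = e^{-\lambda_1 T_L}$ extends consistently to the whole probability space. I expect no substantial obstacle: the content of the lemma is essentially that integrating the exponential survival function against the law of $T_L$ reproduces its Laplace transform. The two items that must be stated rather than computed are the independence of $S_1$ and $Y$ and the bookkeeping on $\{T_L = \infty\}$.
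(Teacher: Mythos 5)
Your proposal is correct and follows essentially the same route as the paper's own proof, which is the one-line computation $\pi(\lambda_1,L,Y)=P(T_L<S_1)=\mathbb{E}\bigl(P(T_L<S_1\mid T_L)\bigr)=\mathbb{E}\,e^{-\lambda_1 T_L}$ via conditioning on $T_L$ and the independence of the exponential clock $S_1$ from $Y$. Your explicit treatment of the event $\{T_L=\infty\}$ (with the convention $e^{-\lambda_1\cdot\infty}=0$) is a careful detail the paper leaves implicit, but it is a refinement of the same argument rather than a different approach.
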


\begin{proof}
\begin{equation}
\pi (\lambda_1, L, Y) = P(T_L < S_1) =\mathbb{E} (P(T_L < S_1 | T_L)) = \mathbb{E} (e^{-\lambda_1 T_L}).
\end{equation}
\end{proof}

This is especially relevant when viewed  in terms of the results obtained by
\cite{Pakes} that can be used to provide an explicit role of the parameters 
in connection with the wash-out probability in Lemma \ref{washtoLT}.  Namely, 
one has

\begin{equation}
\label{pakeseqn}
\mathbb{E}\left(e^{-sT_L}\right)=e^{L\eta(s)},
\end{equation}
where $\theta=\eta(s)$ is the unique solution to $\varphi(\theta)=-s$, and $\varphi(\theta)$ is the cumulant function of the process $Y$:
\begin{equation}
\varphi(\theta)=\left(\tilde{v}+\int_0^1xn(dx)\right)\theta-\frac{D\theta^2}{2}+\int_0^1(1-e^{-\theta x}-\theta x)n(dx)
+\int_1^\infty(1-e^{-\theta x})n(dx).
\end{equation}
\ 

In particular, using (\ref{pakeseqn})  it follows that
\begin{cor}
\begin{equation}
\mathbb{E} T_L=\frac{L}{v},
\end{equation}
where $v$ is the effective velocity, $v=\tilde{v}+\int_0^\infty xn(dx)$.
\end{cor}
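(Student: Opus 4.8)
The plan is to read off $\mathbb{E} T_L$ from the Laplace transform supplied by (\ref{pakeseqn}). Since $T_L \ge 0$, its mean is recovered from the Laplace transform by differentiation at the origin,
\begin{equation}
\mathbb{E} T_L = -\left.\frac{d}{ds}\mathbb{E}\left(e^{-sT_L}\right)\right|_{s=0^+} = -\left.\frac{d}{ds}e^{L\eta(s)}\right|_{s=0^+} = -L\,\eta'(0)\,e^{L\eta(0)},
\end{equation}
so the whole computation reduces to evaluating $\eta(0)$ and $\eta'(0)$.

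First I would determine $\eta(0)$. Setting $\theta = 0$ in the cumulant function $\varphi$ makes every term vanish, so $\varphi(0) = 0$; since $\eta(s)$ is defined as the solution of $\varphi(\theta) = -s$, the value $s = 0$ forces $\eta(0) = 0$, and correspondingly $e^{L\eta(0)} = 1$ (consistent with $\mathbb{E} e^{0} = 1$). Next I would obtain $\eta'(0)$ by implicit differentiation of the defining relation $\varphi(\eta(s)) = -s$: differentiating in $s$ gives $\varphi'(\eta(s))\,\eta'(s) = -1$, whence $\eta'(0) = -1/\varphi'(0)$. Substituting back yields $\mathbb{E} T_L = L/\varphi'(0)$, so it only remains to identify $\varphi'(0)$ with the effective velocity $v$.

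The identification $\varphi'(0) = v$ is the computational heart. Differentiating $\varphi$ term by term,
\begin{equation}
\varphi'(\theta) = \left(\tilde{v} + \int_0^1 x\,n(dx)\right) - D\theta + \int_0^1\left(x e^{-\theta x} - x\right)n(dx) + \int_1^\infty x e^{-\theta x}\,n(dx),
\end{equation}
and evaluating at $\theta = 0$ the Brownian term drops out, the integrand $x e^{-\theta x} - x$ vanishes on $(0,1)$, and the last integral reduces to $\int_1^\infty x\,n(dx)$, leaving
\begin{equation}
\varphi'(0) = \tilde{v} + \int_0^1 x\,n(dx) + \int_1^\infty x\,n(dx) = \tilde{v} + \int_0^\infty x\,n(dx) = v.
\end{equation}
This gives $\mathbb{E} T_L = L/v$.

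The main obstacles are analytic rather than algebraic. I expect the delicate step to be justifying the differentiation of the Laplace transform precisely at the boundary point $s = 0^+$: this amounts to showing $\mathbb{E} T_L < \infty$ and that the one-sided derivative of $s \mapsto \mathbb{E} e^{-sT_L}$ at $0$ equals $-\mathbb{E} T_L$ (a monotone-convergence or Tauberian argument), and it is exactly here that the hypothesis $v > 0$ in (DH) enters, since positive effective velocity guarantees $T_L < \infty$ almost surely with finite mean. One must also justify differentiating under the integral signs defining $\varphi$ near $\theta = 0$ (by dominated convergence, using $\int_0^1 x^2 n(dx) < \infty$ and the finiteness $\int_1^\infty x\,n(dx) < \infty$, the latter being the finite-mean condition implicit in $v<\infty$), and invoke $\varphi'(0) = v > 0$ so that, by the implicit function theorem, the branch $\eta$ is well defined and differentiable at the origin.
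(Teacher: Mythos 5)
Your proposal is correct and follows exactly the route the paper intends: the paper states the corollary as an immediate consequence of the Pakes formula $\mathbb{E}(e^{-sT_L})=e^{L\eta(s)}$ without writing out the details, and your computation---differentiating the transform at $s=0^+$, using $\varphi(\eta(s))=-s$ to get $\eta'(0)=-1/\varphi'(0)$, and verifying $\varphi'(0)=\tilde{v}+\int_0^\infty x\,n(dx)=v$---is precisely that omitted derivation. Your attention to the analytic fine points (monotone convergence at $s=0^+$, finiteness of $\int_1^\infty x\,n(dx)$ from $v<\infty$, and $\varphi'(0)=v>0$ ensuring the branch $\eta$ is differentiable) goes beyond what the paper records and is sound.
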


\begin{lemma}
\label{lemma2}
The function $L \to \pi (\lambda_1, L, Y)$, for a fixed value of $\lambda_1$, is continuous and non-increasing, with $\pi (\lambda_1, 0, Y) = 1$ and $lim_{L \to \infty} \pi (\lambda_1, L, Y)=0$.
\end{lemma}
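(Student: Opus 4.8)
The plan is to reduce all four assertions to the single explicit formula already available from the preceding development. Combining Lemma~\ref{washtoLT} with Pakes' formula~(\ref{pakeseqn}) evaluated at $s=\lambda_1$ gives
\[
\pi(\lambda_1,L,Y)=\mathbb{E}\,e^{-\lambda_1 T_L}=e^{L\eta(\lambda_1)},
\]
where $\theta=\eta(\lambda_1)$ is the root of $\varphi(\theta)=-\lambda_1$. Writing $c:=\eta(\lambda_1)$, which is a fixed constant once $\lambda_1$ is fixed, the function under study is simply $L\mapsto e^{cL}$, and every claim of the lemma is an elementary property of this exponential, provided one knows that $c<0$.

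The crux is therefore to establish $\eta(\lambda_1)<0$ for $\lambda_1>0$, which I would obtain from the shape of the cumulant function $\varphi$. First, $\varphi(0)=0$ by inspection. Differentiating under the integral sign (justified by the integrability built into the L\'evy--It\^o decomposition together with (DH)) gives $\varphi'(0)=\tilde v+\int_0^\infty x\,n(dx)=v>0$, the effective velocity, while $\varphi''(\theta)=-D-\int_0^\infty x^2 e^{-\theta x}\,n(dx)\le 0$, so $\varphi$ is concave. Concavity makes $\varphi'$ non-increasing, hence $\varphi'(\theta)\ge\varphi'(0)=v>0$ for every $\theta\le 0$; thus $\varphi$ is strictly increasing on $(-\infty,0]$, running from $-\infty$ as $\theta\to-\infty$ (where the $-D\theta^2/2$ term and the $e^{-\theta x}$ factors force the blow-up) up to $\varphi(0)=0$. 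Consequently the equation $\varphi(\theta)=-\lambda_1<0$ has exactly one solution on the negative half-line, and this is the root selected in~(\ref{pakeseqn}), since it is the one making $e^{L\eta(\lambda_1)}$ a genuine Laplace transform lying in $(0,1]$. Hence $c=\eta(\lambda_1)<0$.

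With $c<0$ fixed, the four conclusions read off immediately from $L\mapsto e^{cL}$: it is smooth, hence continuous; it is strictly decreasing because $c<0$, which gives the non-increasing assertion; $e^{c\cdot 0}=1$ gives $\pi(\lambda_1,0,Y)=1$; and $e^{cL}\to 0$ as $L\to\infty$ gives $\lim_{L\to\infty}\pi(\lambda_1,L,Y)=0$.

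I expect the sign and uniqueness of $\eta(\lambda_1)$ to be the only real obstacle; everything downstream is trivial once $c<0$. The technical care lies in justifying differentiation under the integral sign and the behavior of $\varphi$ at $-\infty$ from the integrability of the L\'evy measure, and in confirming that $\eta$ picks out the negative root rather than any crossing on the positive side that concavity might also produce. As a more probabilistic alternative avoiding the cumulant analysis, one can argue directly from $\pi=\mathbb{E}\,e^{-\lambda_1 T_L}$: spectral negativity forces the path to creep up to each level, so $L\mapsto T_L$ is pathwise non-decreasing (yielding monotonicity) and a.s.\ continuous (yielding continuity); $T_0=0$ gives the value $1$ at the origin; and $v>0$ forces $T_L\to\infty$ almost surely, so dominated convergence (the integrand is bounded by $1$) gives the limit $0$. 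I would lead with the explicit-formula route, since it is cleaner and matches the surrounding exposition.
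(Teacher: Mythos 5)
Your fallback paragraph at the end is, almost verbatim, the paper's own proof: the authors get $\pi(\lambda_1,0,Y)=1$ from $T_0=0$, monotonicity from the pathwise inequality $T_{L_1}\le T_{L_2}$ (so $\{T_{L_2}<S_1\}\subseteq\{T_{L_1}<S_1\}$), and the limit at infinity from non-explosivity ($T_L\to\infty$) together with Lemma~\ref{washtoLT} and dominated convergence; only continuity is delegated to ``the formulae of Pakes,'' i.e.\ to the representation $\pi=e^{L\eta(\lambda_1)}$ that you lead with. So your primary route inverts the paper's emphasis: you derive all four assertions from the explicit exponential, whereas the paper derives all but continuity probabilistically. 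Your route buys strictness (the map is strictly decreasing, with an explicit rate $\eta(\lambda_1)$) and a self-contained continuity argument; the paper's route buys independence from any fine analysis of the cumulant --- in particular the limit at infinity comes for free from non-explosivity, with no need to locate or sign a root of $\varphi$.

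The one substantive issue --- which you flagged but did not resolve --- is that your concavity argument lives on the negative half-line, where the printed cumulant need not be finite under (DH) alone. For $\theta<0$ the term $\int_1^\infty(1-e^{-\theta x})\,n(dx)$ requires $\int_1^\infty e^{|\theta|x}\,n(dx)<\infty$, and (DH) guarantees only a finite first moment of $n$; a L\'evy measure with tail like $e^{-\sqrt{x}}\,dx$ has finite mean yet makes $\varphi\equiv-\infty$ on $(-\infty,0)$, so your intermediate-value step has nothing to bite on. The standard repair, which is what actually underlies Pakes' formula, is to run the convexity analysis on the half-line where spectral negativity makes the exponent automatically finite: $\psi(u)=\log\mathbb{E}\,e^{uY_1}$ is finite for all $u\ge 0$ (upward jumps are absent, so $e^{uY_1}$ has light positive tail), convex, with $\psi(0)=0$, $\psi'(0^+)=v>0$, hence $\psi'\ge v$ on $[0,\infty)$ and $\psi(u)\ge vu\to\infty$; the equation $\psi(u)=\lambda_1$ then has a unique root $\Phi(\lambda_1)>0$ with no exponential-moment hypothesis, and $\pi=e^{-L\Phi(\lambda_1)}$, so your $\eta(\lambda_1)$ is $-\Phi(\lambda_1)<0$. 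Two smaller points: do not attribute the blow-up of $\varphi$ at $-\infty$ to the $-D\theta^2/2$ term, which vanishes when $D=0$; the tangent bound $\varphi(\theta)\le\varphi(0)+\varphi'(0)\theta=v\theta$ from concavity gives it uniformly, including the pure-drift case. Since your probabilistic fallback is complete and matches the paper, the lemma is safe on your account either way; I would simply lead with that argument and use the exponential formula only for continuity and strictness.
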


\begin{proof}
To see that $\pi (\lambda_1,0,Y) = 1$, note that $Y_0 = 0$ implies $T_0 = 0$ so that $\pi (\lambda_1,0,v) = P(0 < S_1) = 1$. That $\pi$ is a non-increasing function of $L$ follows easily from the fact that $L_1 < L_2$ implies $T_{L_1} < T_{L_2}$ so that $T_{L_2} < S_1$
implies  $T_{L_1} < S_1$ and we have $P(T_{L_2} < S_1) \le P(T_{L_1} < S_1).$
To prove that $lim_{L \to \infty} \pi (\lambda_1,L,Y) = 0$, first note that $T_L \to \infty$ as $L \to \infty$ by the non-explosivity of the underlying L\'evy process $Y$ under hypothesis (DH).   So the asserted limit
follows from Lemma \ref{washtoLT} and the dominated convergence theorem.
The continuity of $\pi$ as a function of $L$ is also evident from the formulae of \cite{Pakes}.
\end{proof}

In the case that $0 < \lambda_0 < r$, as noted in \cite{lutscher_etal_2005}, the population growth is obviously sufficient to prevent extinction without further need for analysis.  Thus we restict attention to  $\lambda_0 > r$ and obtain
the following main consequence of the previous lemmas. 

\begin{thm} Under (DH) with $\lambda_{0},\lambda_{1}, r>0$ and $r < \lambda_{0}$, there is an  $L = L_c$
 such that $r=\lambda_{0}\pi(\lambda_{1},L_c,Y)$.
\end{thm}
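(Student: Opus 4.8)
The plan is to reduce the statement to a direct application of the Intermediate Value Theorem, with all of the substantive work already packaged in Lemma \ref{lemma2}. Since $0 < r < \lambda_0$, the ratio $r/\lambda_0$ lies strictly in the open interval $(0,1)$, and the critical equation $r = \lambda_0\pi(\lambda_1,L,Y)$ is equivalent to solving $\pi(\lambda_1,L,Y) = r/\lambda_0$ for $L$. So I would fix $\lambda_1$ and view $\pi(\lambda_1,\cdot,Y)$ as a continuous real-valued function of $L$ on $[0,\infty)$, introducing $f(L) = \pi(\lambda_1,L,Y) - r/\lambda_0$.

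First I would record the two endpoint facts supplied by Lemma \ref{lemma2}. At $L=0$ the wash-out probability equals $1$, so that $f(0) = 1 - r/\lambda_0 > 0$, the inequality being strict precisely because $r < \lambda_0$. At the other end, $\pi(\lambda_1,L,Y)\to 0$ as $L\to\infty$, so $f(L) \to -r/\lambda_0 < 0$; in particular $f(L) < 0$ for all sufficiently large $L$. Thus $f$ changes sign on $[0,\infty)$.

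Next, since $\pi(\lambda_1,\cdot,Y)$ is continuous by Lemma \ref{lemma2}, the function $f$ is continuous, and the Intermediate Value Theorem produces a point $L_c\in(0,\infty)$ with $f(L_c)=0$, i.e. $\pi(\lambda_1,L_c,Y) = r/\lambda_0$, which rearranges to the desired $r = \lambda_0\pi(\lambda_1,L_c,Y)$. This completes the existence argument.

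There is essentially no obstacle at the level of the theorem itself: the entire difficulty has been front-loaded into Lemma \ref{lemma2}, whose limit $\lim_{L\to\infty}\pi(\lambda_1,L,Y)=0$ relied on non-explosivity of the L\'evy process together with dominated convergence, and whose continuity claim rests on the explicit formula (\ref{pakeseqn}) of \cite{Pakes}. If anything, the one point that genuinely requires the hypothesis is the strict inequality $r<\lambda_0$, which guarantees that the target level $r/\lambda_0$ is interior to the range $(0,1)$ swept out by $\pi$, thereby forcing a crossing. I would note in passing that the non-increasing monotonicity from Lemma \ref{lemma2} is not needed for existence (only for uniqueness, which the theorem does not assert), although it does show that the solution set of the critical equation is an interval.
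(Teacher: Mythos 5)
Your proposal is correct and follows essentially the same route as the paper, whose entire proof is the one-line observation that the claim ``follows immediately from Lemma \ref{lemma2} and the intermediate value theorem''; you have simply made the endpoint checks ($\pi(\lambda_1,0,Y)=1$ gives $f(0)=1-r/\lambda_0>0$, and $\pi\to 0$ gives $f<0$ eventually) and the role of the strict inequality $r<\lambda_0$ explicit. Your closing remark that monotonicity is irrelevant to existence and would only bear on uniqueness is a fair and accurate gloss, but nothing in your argument departs from the paper's.
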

\begin{proof}
The asssertion follows immediately from Lemma \ref{lemma2} and the intermediate value theorem.
\end{proof}

\noindent{\bf Example 1:}
Suppose that the process governing the mobile phase is Brownian motion with drift $v$ and dispersion coefficient $D\ge 0$.  Then we have
\begin{equation}
P(T_L < S_1) = \hat{f}_{T_L}(\lambda_1) = \exp({vL \over D} - {\sqrt{v^2 + 2D\lambda_1}\over D}L)
\end{equation}
so that the critical equation (\ref{LPLeqn}) takes the form
\begin{equation}
\ln({r\over\lambda_0}) = ({v\over D} - {\sqrt{v^2+2D\lambda_1}\over D})L,
\end{equation}
so that
\begin{equation}
L = {\ln({\lambda_0\over r})\over 2\lambda_1}(\sqrt{1+{2D\lambda_1 \over v^2}}+1)v. 
\end{equation}
In particular it follows that in the limit  as $v\to\infty$ one obtains
\begin{equation}
\label{asympcrit}
L \approx {1\over\lambda_1}\ln({\lambda_0\over r})v.
\end{equation}
Observe that if $D=0$ then the asymptotic formula  (\ref{asympcrit})
is in fact the exact equation.   So this example also covers the case of deterministic motion in the mobile phase but, 
more to the point, also shows that large enough $v$ nuliifies the dispersion effect.

We see here that, although these results are based on both the underlying regime switching model introduced  in  (\cite{lutscher_etal_2005}, Sec 3.1), and the characterization of the critical length given by expression (3.8) there,  the conclusions above differ qualitatively from \cite{lutscher_etal_2005} concerning equation (\ref{LPLeqn}) when 
calculated in the context of (\ref{popdyn}).  
Specifically, for any finite advection velocity, no matter how large, there will be a length that satisfies (\ref{LPLeqn}), whereas in \cite{lutscher_etal_2005} it was found that there was a critical advection velocity $v_c$ such that $L \to \infty$ as $v\to v_c^{-}$, and above which {\it no length} will satisfy (\ref{LPLeqn}); see Figure \ref{criticallength} for comparison to (\cite{lutscher_etal_2005}, Figure 1).

\begin{figure}[htb] 
\includegraphics[scale=0.4]{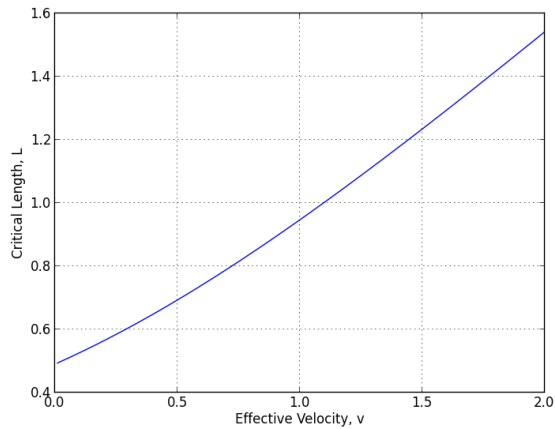}
\caption{Critical length as a function of drift velocity} 
\label{criticallength} 
\end{figure}

\section{Some Concluding Remarks}
While the main result of this paper is somewhat surprisingly at odds with the formula (4.6)  for the critical length 
obtained by
\cite{lutscher_etal_2005} as
a function of drift  speed, the resolution appears to be that the the regime-switching model and the 
model (\ref{popdyn}) are in fact quite different. 


In a still different but somewhat related framework to that of \cite{lutscher_etal_2005},  
\cite{kesten} obtained
 the existence of a finite critical drift speed for a system of branching Brownian motions with negative drift and absorbed at the origin.  While such results capture the spirit of the result in \cite{lutscher_etal_2005},  this involves non-linear production of offspring but {\it without} regime-switching.   The following simple result is provided to simply clarify the respective  roles of linear and non-linear production from a probabilistic perspective.  The case of non-linear production goes back to Henry McKean's  probabilistic representation of solutions to the {\it Kolmogorov-Petrovski-Piskunov-Fisher} (KPPF) reaction-diffusion
 equation\footnote{The KPPF equation is a reaction-diffusion independently introduced by Kolmogorov, Petrovski and
Piskunov on one hand, and by Fisher on the other.  It is sometimes simply referred to as KPP in the older mathematics literature.  However Fisher's considerations in populaton genetics
lead to the same equation, and more modern references either refer to FKPP or KPPF.} in \cite{mckean}.  While we expect that the description given below for the linear case is known, we are unable to find a reference.\footnote{Of course, in the case of a negative rate in the linear case, the probabilistic representation is also well known in terms of \lq particle killing\rq via the Feynman-Kac formula.} 
 
\begin{thm}
\label{clone}
 Let $A_rf(x) = {1\over 2}Df^{\prime\prime}(x) + vf^\prime(x) + rf(x)$ be the infintesimal generator of the transition operator $T_t^r$,
where $r > 0$, $D\ge 0$, $v\in {\mathbb{R}}$.   Also let $Y$ be a Markov process with infinitesimal
generator $A_0 = {1\over 2}D{\partial^2\over\partial x^2} + v{\partial\over\partial x}$ and transition operator $T_t^0$, and let
$N = \{N_t:t\ge 0\}$ be a Yule-Furry branching process with rate $r> 0$, which is independent of $Y$.   Then for 
$f\in {\mathcal D}_A$ one has that $M_t = N_tf(Y(t)) - \int_0^t A_rN_sf(Y_s)ds, t\ge 0,$ is 
a martingale. 
\end{thm}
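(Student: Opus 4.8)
The plan is to realize $N_t f(Y_t)$ as a function of the \emph{joint} process $(N_t, Y_t)$ and to read off its compensator via Dynkin's formula. Since $N$ and $Y$ are independent, the pair $(N_t, Y_t)$ is itself a time-homogeneous Markov process on $\mathbb{Z}_{\ge 0} \times \mathbb{R}$, and the semigroup factorizes, so its infinitesimal generator $\mathscr{L}$ splits additively as
$$\mathscr{L} g(n,x) = Q g(n,x) + A_0 g(n,x),$$
where $A_0 = \tfrac{1}{2} D \partial_x^2 + v \partial_x$ acts in the spatial variable $x$ and $Q$ is the generator of the Yule--Furry process acting in the population variable $n$. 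The interpretation of the integrand $A_r N_s f(Y_s)$ in the statement as $N_s (A_r f)(Y_s)$ will fall out of this computation.

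First I would record the generator of the pure-birth (Yule--Furry) process: from $n$ individuals, each splitting at rate $r$, a birth event occurs at total rate $rn$ and raises the population by one, so $Q g(n) = rn\bigl(g(n+1) - g(n)\bigr)$. Next I would apply $\mathscr{L}$ to the test function $g(n,x) = n f(x)$. The spatial part gives $A_0 g(n,x) = n (A_0 f)(x)$, while the birth part telescopes to $Q g(n,x) = rn\bigl((n+1)f(x) - n f(x)\bigr) = r n f(x)$. Adding the two pieces and using $A_r f = A_0 f + r f$ yields the clean identity
$$\mathscr{L}\,[\,n f(x)\,] = n (A_0 f)(x) + r n f(x) = n (A_r f)(x),$$
which is exactly the integrand appearing in the definition of $M_t$.

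With this identity in hand, Dynkin's formula applied to the Markov process $(N_t, Y_t)$ and the function $g(n,x) = nf(x)$ identifies
$$M_t = N_t f(Y_t) - \int_0^t \mathscr{L}[\,N_s f(Y_s)\,]\,ds = N_t f(Y_t) - \int_0^t N_s (A_r f)(Y_s)\,ds$$
as a (local) martingale with respect to the joint filtration, which is the assertion.

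The step that requires care --- and the main obstacle --- is justifying that Dynkin's formula delivers a genuine martingale for the \emph{unbounded} test function $g(n,x) = n f(x)$, since $n$ ranges over all of $\mathbb{Z}_{\ge 0}$ and the Yule process grows exponentially, $\mathbb{E} N_t = N_0 e^{rt}$. For $f \in \mathcal{D}_A$ with $f$ and $A_r f$ bounded one has $|N_t f(Y_t)| \le \|f\|_\infty N_t$ and $|N_s (A_r f)(Y_s)| \le \|A_r f\|_\infty N_s$, so the required integrability follows from $\sup_{s \le t}\mathbb{E} N_s = N_0 e^{rt} < \infty$ together with Tonelli for the time integral. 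I would therefore either impose boundedness of $f$ and $A_r f$ (harmless for the intended application) or run a localization argument, stopping at the successive birth times of $N$ and passing to the limit using the exponential integrability of the Yule process, to upgrade the local martingale produced by Dynkin's formula to a true martingale.
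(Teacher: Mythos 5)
Your proof is correct, but it travels a genuinely different road from the paper's. The paper verifies the martingale property by a bare-hands computation of $\mathbb{E}(M_t \mid \mathcal{F}_s)$: it factors $\mathbb{E}(N_t f(Y_t)\mid\mathcal{F}_s) = \mathbb{E}(N_t\mid\mathcal{F}_s)\,\mathbb{E}(f(Y_t)\mid\mathcal{F}_s) = N_s e^{r(t-s)} T^0_{t-s}f(Y_s)$ using independence and the Markov property, then recognizes the conditional expectation of the time integral as the integral of $\frac{\partial}{\partial u}\bigl(e^{ru} T^0_u f\bigr) = e^{ru}\bigl(T^0_u A_0 f + r T^0_u f\bigr)$ via the Kolmogorov equation, so that the fundamental theorem of calculus telescopes everything down to $M_s$. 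You instead package $(N_t, Y_t)$ as a single Markov process, observe that independence makes the joint generator split as $\mathscr{L} = Q + A_0$ with $Q g(n) = rn\bigl(g(n+1)-g(n)\bigr)$, compute $\mathscr{L}[\,nf(x)\,] = n(A_r f)(x)$, and invoke Dynkin's formula. Each approach buys something: the paper's is elementary and self-contained, needing only the semigroup identities for $N$ and $Y$ separately; yours is more structural, and in particular makes transparent \emph{why} the zeroth-order term $rf$ appears in $A_r$ --- it is exactly the image of the linear function $n \mapsto n$ under the Yule generator, which is the cleanest conceptual explanation of the ``cloning'' phenomenon the paper is after. Your approach also confronts an issue the paper passes over silently: the test function $g(n,x)=nf(x)$ is unbounded in $n$, so Dynkin a priori yields only a local martingale, and some integrability is needed (the paper's proof likewise tacitly interchanges $\mathbb{E}$ and $\int_s^t \cdot \, ds'$, which requires the same bound). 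Your remedy is sound and can be stated even more simply than your localization sketch: since the Yule process is nondecreasing, $\sup_{s\le t}|M_s| \le \bigl(\|f\|_\infty + t\,\|A_r f\|_\infty\bigr) N_t$, and $\mathbb{E}N_t = N_0 e^{rt} < \infty$, so the local martingale is dominated on each $[0,t]$ by an integrable random variable and is therefore a true martingale; the boundedness of $f$ and $A_r f$ is automatic if $\mathcal{D}_A$ is read as the domain of the generator on a space of bounded continuous functions, which is the natural reading of the theorem's hypothesis.
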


\begin{proof} Let $\mathcal{F}_t = \sigma(N_s,Y_s:s \le t)$.  Then we have, for all $0 \le s \le t$,
\begin{eqnarray}
\mathbb{E}(M_t|\mathcal{F}_s) &=& \mathbb{E}(N_t|\mathcal{F}_s)\mathbb{E}(f(Y_t)|\mathcal{F}_s) - \int_0^s N_{s^\prime}f(Y_{s^\prime})ds^\prime - \int_s^t \mathbb{E}(N_{s^\prime}A_rf(Y_{s^\prime})|\mathcal{F}_s)ds^\prime\nonumber\\
		     &=& N_se^{r(t-s)}T_{t-s}^0f(Y_s) - \int_0^s A_rN_{s^\prime}f(Y_{s^\prime})ds^\prime - \int_s^t e^{r(s^\prime - s)}N_s({\partial T_{s^\prime - s}^0f(Y_{s^\prime})\over\partial s^\prime}\nonumber\\
		     & \  &+ rT_{s^\prime - s}^0f(Y_s))ds^\prime\nonumber\\
		     & =& N_se^{r(t-s)}T_{t-s}^0f(Y_s) - \int_0^s A_rN_{s^\prime}f(Y_{s^\prime})ds^\prime - (e^{r(t - s)}N_sT_{t-s}^0f(Y_s) - N_sf(Y_s))\nonumber\\
		     &=& N_sf(Y_s) - \int_0^s A_rN_{s^\prime}f(Y_{s^\prime})ds^\prime = M_s.
\end{eqnarray}
\end{proof}

It follows from  Theorem \ref{clone} that letting $Y^{(j)}  = Y, \ j = 1,\dots, N_t$,
\begin{equation}
u(x,t) = \mathbb{E}\sum_{j=1}^{N_t}f(Y^{(j)}_t)  \equiv \mathbb{E}_xN_tf(Y_t), \quad t\ge 0,
\end{equation}
solves 
\begin{equation}
{\partial u\over\partial t} = A_ru(x,t), \quad u(x,0^+) = f(x), \ x\in{\mathbb{R}}, t  > 0.
\end{equation}

In the case of KPPF the branching offspring evolve independently along 
paths distributed as their parent, whereas in the case of linear production
the offspring particles
are clones following precisely the same path as their parent.  This is an 
essential difference between the stochastic branching and (linearized) dynamic growth models.  
In the case of branching Brownian motions with negative drift and in the absence of regime
switching, \cite{kesten} shows that
at sufficiently high drift rate 
all particles will eventually be absorbed at the outlet regardless of $L$, 
namely $|v| > \sqrt{2Dr}$, while there is a chance to persist at lower drift speeds toward
the absorbing boundary.
For the linear production rates analyzed here and in \cite{lutscher_etal_2005},  because
of regime switching and cloning the eventual absorption is sure to occur for any positive 
drift speed only if $L$ is too small, i.e., it is always possible to find a (finite) length $L$ 
large enough for persistence under arbitrarily specified drift and regime switching rates.

\section*{Acknowledgments}  The  authors wish to thank Enrique Thomann
for helpful discussions about this problem,  and for calling our attention to
regime switching theory.

\bibliographystyle{plainnat}
\bibliography{DP.bib}

\end{document}